\newlength{\dhatheight}
\newtheorem{theorem}{Theorem}[section]
\newtheorem{proposition}[theorem]{Proposition}
\newtheorem{corollary}[theorem]{Corollary}
\newtheorem{definition}[theorem]{Definition}
\newtheorem{question}[theorem]{Question}
\newcommand{\Z}{{\mathbb Z}}
\newcommand{\Q}{{\mathbb Q}}
\begin{document}
\title[Stair step bridge spectrum does not imply high distance]
{Stair step bridge spectrum does not imply high distance}

\author[N.~Owad]{Nicholas Owad}
\address{Department of Mathematics\\
        University of Nebraska\\
         Lincoln NE 68588-0130, USA}
\email{nowad2@math.unl.edu}
\thanks{2015 {\em Mathematics Subject Classification}. 57M25.  Research partially funded through the MCTP Trainee program}

\begin{abstract}
Tomova, along with results of Bachman and Schleimer, showed that any high distance knot has a stair-step bridge spectrum. In this paper, we compute the bridge spectra and distance of generalized Montesinos knots.  In particular, we produce the first example of a class of knots which attain the stair-step bridge spectra but are not high distance, by computing the bridge spectra of certain pretzel knots and Montesinos knots. \end{abstract}

\maketitle



\section{Introduction}\label{sec:intro}




A $(g,b)$-splitting of a knot $K$ is a Heegaard splitting of $S^3=V_1 \cup_\Sigma V_2$ such that the genus of $V_i$ is $g$, $\Sigma$ intersects $K$ transversally, and $V_i \cap K$ is a collection of $b$ trivial arcs for $i=1,2$.  We define {\em the genus $g$ bridge number}  $b_g(K)$ to be the minimum $b$ for which a $(g,b)$-splitting exists.  The bridge spectrum of a knot $K$ is a strictly decreasing list of integers first defined by Zupan in \cite{Zupan}.  We obtain this list by first looking at $(g,b)$ -splittings of a knot in $S^3$. The {\em bridge spectrum} for $K$, which we denote ${\bf b}(K)$ is the list 

$$(b_0(K),b_1(K),b_2(K),\ldots ).$$

One should note that $b_0$ is the classical bridge number, $b(K)$, first defined by Schubert \cite{Schubert}.  The fact that bridge spectrum is strictly decreasing follows from a process called {\em meridional stabilization} where, given a $(g,b)$-splitting, we can create a $(g+1,b-1)$-splitting. Thus, every bridge spectrum is bounded above by the following sequence, which we call a stair-step spectrum,

 $$\left(b_{0}\left(K\right),b_{0}\left(K\right)-1,b_{0}\left(K\right)-2,\ldots,2,1,0\right).$$  

An invariant of a bridge surface of a knot is the {\em distance}, where we are considering distance in terms of the curve complex of the bridge surface. Tomova showed in \cite{Tomova}, with results of Bachman and Schleimer from \cite{BS}, the following theorem, as stated by Zupan in \cite{Zupan}:

\smallskip

\begin{theorem}\label{thm:dist}
Suppose $K$ is a knot in $S^3$ with a $(0,b)$-bridge sphere $\Sigma$ of sufficiently high distance (with respect to $b$). Then any $(g',b')$-bridge surface $\Sigma ' $  satisfying $b' = b_{g'} (K)$ is the result of meridional stabilizations performed on $\Sigma$. Thus
 $$\left(b_{0}\left(K\right),b_{0}\left(K\right)-1,b_{0}\left(K\right)-2,\ldots,2,1,0\right).$$  
 
\end{theorem}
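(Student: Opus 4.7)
The plan is to leverage the Bachman--Schleimer uniqueness result for high-distance bridge surfaces and then use the minimality hypothesis $b'=b_{g'}(K)$ to strip away any moves other than meridional stabilization. Write $\Sigma$ for the given $(0,b)$-bridge sphere, and recall that there are three standard ways to produce new bridge surfaces from $\Sigma$: \emph{stabilization}, taking $(g,b) \to (g+1,b)$; \emph{perturbation}, taking $(g,b)\to(g,b+1)$; and \emph{meridional stabilization}, taking $(g,b)\to(g+1,b-1)$. Each increases complexity in some controlled way, and two of the three increase the bridge number.

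First, I would invoke the Bachman--Schleimer theorem from \cite{BS}, applied at the distance threshold specified by Tomova in \cite{Tomova}: if the distance $d(\Sigma)$ is large enough as a function of $b$ (and, implicitly, of the genus $g'$ of the competing surface), then any other bridge surface $\Sigma'$ for $K$ is isotopic to a surface obtained from $\Sigma$ by some sequence of the three moves above, or else $\Sigma$ itself is obtained from $\Sigma'$ by such moves. The latter case is ruled out because $\Sigma$ already has genus $0$, so no destabilization, deperturbation, or reverse meridional stabilization can be performed on it without producing a non-surface.

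Next I would use the minimality hypothesis $b' = b_{g'}(K)$ to pin down the sequence of moves. Suppose $\Sigma'$ is obtained from $\Sigma$ by $s$ stabilizations, $p$ perturbations, and $m$ meridional stabilizations in some order. Then $g' = s+m$ and $b' = b + p - m$. For fixed $g'$, minimizing $b'$ forces $p=0$ and pushes $m$ as large as possible; since $s, m \geq 0$ and $s+m = g'$, the minimum is achieved at $s=0$, $m=g'$, giving $b' = b - g'$. Thus $b_{g'}(K) = b_0(K) - g'$ for every $0 \leq g' \leq b_0(K)$, and $b_{g'}(K) = 0$ beyond that (the sequence is truncated at the $0$-bridge genus by the existence of a Heegaard splitting of $S^3$ in which $K$ is a core).

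The main obstacle is a clean handling of the ``sufficiently high distance'' quantifier: Tomova's theorem requires $d(\Sigma)$ to exceed a bound depending on the Euler characteristic of $\Sigma'$, so strictly speaking one needs the threshold to grow with $g'$. For the purposes of the spectrum, however, only finitely many values $g' \leq b_0(K)$ can yield $b_{g'}(K) > 0$, so one can fix a single distance bound valid for all relevant $g'$. A secondary technical point to verify is that the order of the three moves does not matter for the $(g',b')$-bookkeeping above, which is standard but should be cited. Once these two points are addressed, the stair-step conclusion is immediate from the arithmetic $b' = b_0(K) - g'$.
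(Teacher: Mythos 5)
The paper does not actually prove this theorem: it is quoted verbatim from Zupan \cite{Zupan}, who attributes it to Tomova \cite{Tomova} together with Bachman--Schleimer \cite{BS}, so there is no in-paper argument to compare yours against. That said, your outline is essentially the intended derivation from those sources: Tomova's theorem gives that if $d(\Sigma,K) > 2-\chi(\Sigma'_K) = 2(g'+b')$ then $\Sigma'$ is obtained from $\Sigma$ by stabilizations, perturbations, and meridional stabilizations, and the additive bookkeeping $(g,b)\mapsto(g+1,b)$, $(g,b+1)$, $(g+1,b-1)$ is order-independent, so $g'=s+m$ and $b'=b+p-m$. Your handling of the quantifier is also right: since $g'+b'\le b_0(K)\le b$ for all relevant $g'$, a single threshold depending only on $b$ suffices. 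Two points to tighten. First, your conclusion $b_{g'}(K)=b_0(K)-g'$ silently substitutes $b_0(K)$ for $b$; rather than arguing separately that $b=b_0(K)$, note that the moves give $b'\ge b-m\ge b-g'$ while meridional stabilization always gives $b_{g'}(K)\le b_0(K)-g'\le b-g'$, so the chain $b-g'\le b'=b_{g'}(K)\le b_0(K)-g'\le b-g'$ collapses to equalities, yielding $s=p=0$, $m=g'$, and $b=b_0(K)$ all at once. Second, the dichotomy in Tomova's theorem is not ``either $\Sigma'$ comes from $\Sigma$ or $\Sigma$ comes from $\Sigma'$'' but ``either $\Sigma'$ comes from $\Sigma$ (or they are isotopic) or $d(\Sigma,K)\le 2-\chi(\Sigma'_K)$''; your extra branch is harmless since you discard it, but the citation should be stated in the latter form so that the high-distance hypothesis is what does the work.
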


A natural question to ask is the following:

\begin{question}
Given a knot, $K$, if the bridge spectrum of $K$ is stair-step, does $K$ necessarily have a $(0,b)$-bridge sphere that is high distance?
\end{question}

We will answer this question by showing that pretzel knots $K_n (p_1,p_2,\ldots,p_n)$ where $n\geq4$ are distance one, and that pretzel knots with $\gcd(p_1,p_2,\ldots,p_n)\not =1$ are stair-step.  

Theorem \ref{thm:dist} tells us that a generic knot has stair-step bridge spectrum.  We say there is a {\em gap} at index $g$ in the bridge spectrum if $b_g(K)<b_{g-1}(K)-1$.  Most work regarding bridge spectra focuses on finding bridge spectra with gaps.

Lustig and Moriah \cite{LM} define generalized Montesinos knots, which include Montesinsos knots and pretzel knots and show that a class of these generalized Montesinos knots have the property $t(K)+1=b(K)=b_0(K)$.  For a $(g,b)$-splitting, with $b>0$, we have $t(K) \leq g+b-1$, where $t(K)$ is the tunnel number, which comes to us from Morimoto, Sakuma, and Yokota \cite{MSY}.  From these two facts, one can quickly conclude that this class of generalized Montesinos knots has the stair-step bridge spectra.  We also show that this class of knots has distance one, so they do not satisfy the hypothesis of Tomova's theorem.

In section 2, we define the relevant terms and give some basic results.  In section three, we state and prove the main results and ask some questions.






\section{Notation and Background} \label{sec:notation}


\subsection{Bridge Spectrum}\label{subsec:bridgespectrum}

A knot $K$ in $S^3$ is an embedded copy of $S^1$.  Two knots are equivalent if there exists an ambient isotopy of $S^3$ taking one knot to the other.  Given a 3-manifold $M$ with boundary, a {\em trivial arc} is a properly embedded arc $\alpha$ ($\partial \alpha \subset \partial M$ ) that cobounds a disk $D$ with an arc $\beta \subset \partial M$, i.e., $\alpha \cap \beta =\partial \alpha =\partial \beta$ and there is an embedded disk $D \subset M$ such that $\partial D = \alpha \cup \beta$. We call the disk $D$ a {\em bridge disk}.  A {\em bridge splitting} of a knot $K$ in $S^3$ is a decomposition of $(S^3,K)$ into $(V_1,A_1) \cup_\Sigma (V_2,A_2)$, where $V_i$ is a handlebody with boundary $\Sigma$ and $A_i \subset V_i$ is a collection of trivial arcs for $i=1,2$.  One should note that when $K=\emptyset$, a bridge splitting is a {\em Heegaard splitting}.   A {\em $(g,b)$-splitting} is a bridge splitting with $g(V_i)=g$ and $|A_i|=b$ for $i=1,2$.

\begin{definition} \label{def:genus $g$ bridge number}

The genus $g$ bridge number of a knot $K$, $b_g(K)$, is the minimum $b$ such that a $(g,b)$-splitting of $K$ exists. We require that the knot can be isotoped into the genus $g$ Heegaard surface for $b_g (K)=0$. 

\end{definition}

As mentioned earlier, the genus zero bridge number is the classical bridge number. 

\begin{definition} \label{def:bridge spectrum}

The bridge spectrum of a knot $K$, ${\bf b}(K)$ is the list of genus $g$ bridge numbers:

$$(b_0(K),b_1(K),b_2(K),\ldots ).$$

\end{definition}

A simple closed curve in the boundary of a handlebody is called {\em primitive} if it transversely intersects the boundary of a properly embedded essential disk on the boundary in a single point.  Some define $b_g(K)=0$ when $K$ can be isotoped into a genus $g$ Heegaard surface and is primitive.  We will define another invariant, $\hat{b}_g(K)$ with the following added requirement. 

\begin{definition} \label{def:primitive genus $g$ bridge number}

The (primitive) genus $g$ bridge number of a knot $K$, $\hat{b}_g(K)$, is the minimum $b$ such that there is a $(g,b)$-splitting of $K$ exists. We require that the knot can be isotoped into the genus $g$ Heegaard surface and is primitive for $\hat{b}_g (K)=0$.

\end{definition}

\begin{definition} \label{def:primitive bridge spectrum }

The (primitive) bridge spectrum of a knot $K$, $ {\bf \hat{b}}(K)$ is the list of genus $g$ bridge numbers:

$$(\hat{b}_0(K),\hat{b}_1(K),\hat{b}_2(K),\ldots ).$$

\end{definition}

The only potential difference between these two spectra is the last non-zero term in the sequence. For example, a non-trivial torus knot $K=T_{p,q}$, we have ${\bf b}(K)=(\min\{p,q\},0)$ while ${\bf \hat{b}}(K)=(\min\{p,q\},1,0)$.  So they are distinct invariants. But there are knots for which they coincide. Any 2-bridge knot that is not a torus knot will have ${\bf \hat{b}}(K)={\bf b}(K)=(2,1,0)$.  

Bridge spectra are always strictly decreasing sequences.  To see this, take any $(g,b)$-splitting of a knot.  Then by definition we have a decomposition as  $(V_1,A_1) \cup_\Sigma (V_2,A_2)$.  Let $N(\cdot)$ be a closed regular neighborhood and $\eta(\cdot)$ an open regular neighborhood. Intuitively, take any trivial arc $\alpha$ in, say, $(V_1,A_1)$, then $N(\alpha)$ is a closed neighborhood, we take this neighborhood from $V_1$ and give to $V_2$, which produces one higher genus handlebodies, and one less trivial arc. More carefully, let $W_1=(V_1-\eta(\alpha))$ and $W_2=(V_2 \cup N(\alpha))$.  Notice that $g(W_i)=g(V_i)+1$ and $B_1 = A_1 - \{\alpha\}$, $B_2 = A_2 \cup \alpha$ is $A_2$ with two arcs combined into a single arc by connecting them with $\alpha$.  Thus, we have  $(W_1,B_1) \cup_{\Sigma'} (W_2,B_2)$ which is a $(g+1,b-1)$-splitting. This process is called {\em meridional stabilization} and gives us the relation $b_{g+1}(K) \leq b_g(K)-1$.  
Hence, every bridge spectrum is bounded above by the stair-step spectrum

 $$\left(b_{0}\left(K\right),b_{0}\left(K\right)-1,b_{0}\left(K\right)-2,\ldots,2,1,0\right).$$  

For more background on bridge spectra of knots, see \cite{BTZ}, \cite{doll}, \cite{MS}, and \cite{Zupan}.


\subsection{Tunnel Number}\label{subsec:tunnel}

For a knot, $K$ in $S^3$, we define the exterior of $K$ as $S^3 - \eta(K)$ and denote it by $E(K)$. A family of mutually disjoint properly embedded arcs $\Gamma$ in $E(K)$ is said to be an {\em unknotting tunnel system} if $E(K)-\eta(\Gamma)$ is homeomorphic to a handlebody. 

\begin{definition}

The tunnel number of a knot $K$, $t(K)$, is the minimum size over all unknotting tunnel systems for $K$. 

\end{definition}
 
Morimoto gives an equivalent definition in \cite{Morimoto}, which we will use here. There is a Heegaard splitting $(V_1, V_2)$ of $S^3$ such that a handle of $V_1$ contains $K$ as a core of $V_1$.

\begin{definition}

The minimum genus of $V_1$ minus one over all Heegaard splittings satisfying the above fact is the tunnel number, $t(K)$. 

\end{definition}


\subsection{Distance}\label{subsec:dist}

For a more thorough discussion on distance, see \cite{Tomova}. Given a compact, orientable, properly embedded surface $S$ in a 3-manifold $M$, the 1-skeleton of the {\em curve complex}, $\mathcal{C}(S)$, is the graph whose vertices correspond to isotopy classes of essential simple closed curves in $S$ and two vertices are connected if the corresponding isotopy classes have disjoint representatives. For two subsets $A$ and $B$ of $\mathcal{C}(S)$, we define the distance between them, $d(A,B)$ to be the length of the  shortest path from $A$ to $B$.  

For any subset $X\subset S^3$, we define $X_K$ to be $E(K)\cap X$.

\begin{definition}

Suppose $M$ is a closed, orientable irreducible 3-manifold containing a knot $K$ and suppose $P$ is a bridge surface for $K$ splitting $M$ into handlebodies $A$ and $B$. The curve complex $C(P_K)$ is a graph with vertices corresponding to isotopy classes of essential simple closed curves on $P_K$. Two vertices are adjacent in $C(P_K)$ if their corresponding classes of curves have disjoint representatives. Let $\mathcal{A}$ (resp $\mathcal{B}$) be the set of all essential simple closed curves on $P_K$ that bound disks in $A_K$ (resp. $B_K$). Then $d(P, K) = d(\mathcal{A}, \mathcal{B})$ measured in $C(P_K )$.

\end{definition}


\subsection{Generalized Montesinos Knots}\label{subsec:GMK}

J. Montesinos defined the class of knots and links that now bear his name in 1973 in \cite{Montesinos}.  Given a rational number $\frac{\beta}{\alpha} \in \Q$, there is a unique continued fraction decomposition 
\[
\frac{\beta}{\alpha}=   a_1+\cfrac{1}{a_2+\cfrac{1}{a_3+\cfrac{1}{\ddots +\cfrac{1}{a_m } }}} = :[a_1,a_2, \ldots, a_m]
\]
where $a_i \not = 0$ for all $i=1,\ldots,m$ and $m$ is odd. For each rational number, there is an associated tangle that we can think of as two unoriented arcs on a sphere with fixed endpoints.  

\begin{figure}
\begin{tikzpicture}



\draw (-4.5,1.5) rectangle +(2,2);

\draw (-4,1.5) -- +(0,-1);
\draw (-3,1.5) -- +(0,-1);
\draw (-4,3.5) -- +(0,1);
\draw (-3,3.5) -- +(0,1);

\node at (-3.5,2.5) {$3,5$};

\draw [thick] (-2,2.625) -- +(.5,0);
\draw [thick] (-2,2.375) -- +(.5,0);

\draw [dashed] (-.25,-.25) rectangle +(3.5,5.5);

\draw [rounded corners] (.4,3.6) -- (0,4) -- (1,5) -- (1.4,4.6);
\draw [rounded corners] (.4,2.6) -- (0,3) -- (2,5) -- (2.4,4.6);
\draw [rounded corners] (0,-1) -- (0,0) -- (3,3) -- (2.6,3.4);
\draw [rounded corners] (.4,.6) -- (0,1) -- (3,4) -- (2.6,4.4);
\draw [rounded corners] (.4,1.6) -- (0,2) -- (3,5) -- (3,6);
\draw [rounded corners] (.6,.4) -- (1,0) -- (3,2) -- (2.6,2.4);
\draw [rounded corners] (1.6,.4) -- (2,0) -- (3,1) -- (2.6,1.4);

\draw (.6, 1.4) -- +(.3,-.3);
\draw (1.1, .9) -- +(.3,-.3);

\draw (.6, 2.4) -- +(.3,-.3);
\draw (1.1, 1.9) -- +(.3,-.3);
\draw (1.6, 1.4) -- +(.3,-.3);
\draw (2.1, .9) -- +(.3,-.3);
\draw [rounded corners] (2.6,.4) -- (3,0) -- (3,-1) ;

\draw (.6, 3.4) -- +(.3,-.3);
\draw (1.1, 2.9) -- +(.3,-.3);
\draw (1.6, 2.4) -- +(.3,-.3);
\draw (2.1, 1.9) -- +(.3,-.3);

\draw (.6, 4.4) -- +(.3,-.3);
\draw (1.1, 3.9) -- +(.3,-.3);
\draw (1.6, 3.4) -- +(.3,-.3);
\draw (2.1, 2.9) -- +(.3,-.3);
\draw [rounded corners] (.4,4.6) -- (0,5) -- (0,6);

\draw (1.6, 4.4) -- +(.3,-.3);
\draw (2.1, 3.9) -- +(.3,-.3);

\end{tikzpicture}

\caption{A rational tangle $\frac{3}{5}$.}

\end{figure}
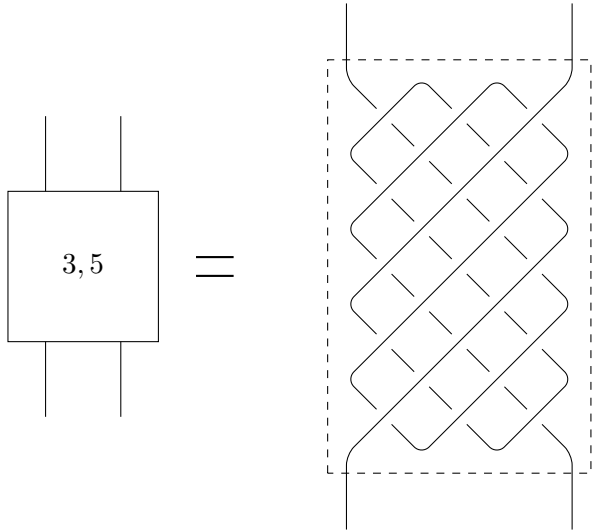

\begin{definition} 
A Montesinos knot or link $M(\frac{\beta_1}{\alpha_1},\frac{\beta_2}{\alpha_2},\ldots,\frac{\beta_n}{\alpha_n}\vert e)$ is the knot in Figure 3 where each $\beta_i,\alpha_i$ for $i=1,\ldots, n$ represents a rational tangle given by $\frac{\beta_i}{\alpha_i}$, and $e$ represents the number of positive half-twists. If $e$ is negative, we have negative half-twists instead. 

\end{definition}

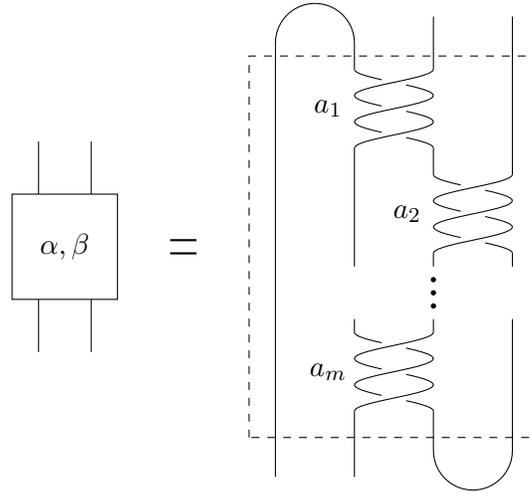
\begin{figure}
\begin{tikzpicture}[scale = .7]



\draw (0.5,4.625) rectangle +(2,2);

\draw (1,4.625) -- +(0,-1);
\draw (2,4.625) -- +(0,-1);
\draw (1,6.625) -- +(0,1);
\draw (2,6.625) -- +(0,1);

\node at (1.5,5.625) {$\alpha,\beta$};

\draw [thick] (3.5,5.5) -- (4,5.5);
\draw [thick] (3.5,5.75) -- (4,5.75);

\draw [dashed] (5,2) rectangle +(5.5,7.25);

\draw(5.5,1.25) -- (5.5, 9.5);

\draw(7,9) -- (7, 9.5);
\draw(7,5.25) -- (7, 7.5);
\draw(7,4) -- (7, 4.25);
\draw(7,1.25) -- (7, 2.5);

\draw(8.5,1.75) -- (8.5, 2.5);
\draw(8.5,4) -- (8.5, 4.25);
\draw(8.5,5.25) -- (8.5, 5.5);
\draw(8.5,7) -- (8.5, 7.5);
\draw(8.5,9) -- (8.5, 10);

\draw(10,1.75) -- (10, 4.25);
\draw(10,5.25) -- (10, 5.5);
\draw(10,7) -- (10, 10);

\draw [domain=0:180] plot ({6.25+.75*cos(\x)}, {9.5+.75*sin(\x)});	
\draw [domain=180:360] plot ({9.25+.75*cos(\x)}, {1.75+.75*sin(\x)});


\draw [domain=0:.4] plot ({.75*cos(pi* \x r )+7.75}, .5*\x+7.5);

\draw [domain=0:1.4] plot ({-.75*cos(pi* \x r )+7.75}, .5*\x+7.5);

\draw [domain=.6:2.4] plot ({.75*cos(pi* \x r )+7.75}, .5*\x+7.5);

\draw [domain=.6:2] plot ({.75*cos(pi* \x r )+7.75}, .5*\x+8);
\draw [domain=.6:1] plot ({.75*cos(pi* \x r )+7.75}, .5*\x+8.5);


\draw [domain=0:.4] plot ({.75*cos(pi* \x r )+9.25}, .5*\x+5.5);

\draw [domain=0:1.4] plot ({-.75*cos(pi* \x r )+9.25}, .5*\x+5.5);

\draw [domain=.6:2.4] plot ({.75*cos(pi* \x r )+9.25}, .5*\x+5.5);

\draw [domain=.6:2] plot ({.75*cos(pi* \x r )+9.25}, .5*\x+6);
\draw [domain=.6:1] plot ({.75*cos(pi* \x r )+9.25}, .5*\x+6.5);

\draw [domain=0:.4] plot ({.75*cos(pi* \x r )+7.75}, .5*\x+2.5);

\draw [domain=0:1.4] plot ({-.75*cos(pi* \x r )+7.75}, .5*\x+2.5);

\draw [domain=.6:2.4] plot ({.75*cos(pi* \x r )+7.75}, .5*\x+2.5);

\draw [domain=.6:2] plot ({.75*cos(pi* \x r )+7.75}, .5*\x+3);
\draw [domain=.6:1] plot ({.75*cos(pi* \x r )+7.75}, .5*\x+3.5);

\draw [fill] (8.5,4.5) circle [radius=0.04];	
\draw [fill] (8.5,4.75) circle [radius=0.04];	
\draw [fill] (8.5,5) circle [radius=0.04];

\node at (6.5,3.25) {$a_{m}$};
\node at (8,6.25) {$a_{2}$};
\node at (6.5,8.25) {$a_{1}$};

\end{tikzpicture}

\caption{A rational tangle in the 2-bridge form.}

\end{figure}

\begin{figure}
\begin{tikzpicture}[xscale = .7, yscale = .7]




\draw (9.5,3) rectangle +(2,2);

\draw (9.5,9) rectangle +(2,2);
\draw (9.5,12) rectangle +(2,2);

\draw [domain=0:.4] plot ({.5*cos(pi* \x r )+1.5}, \x+6);

\draw [domain=0:1.4] plot ({- .5*cos(pi* \x r )+1.5}, \x+6);

\draw [domain=.6:2.4] plot ({.5*cos(pi* \x r )+1.5}, \x+6);

\draw [domain=.6:2.4] plot ({.5*cos(pi* \x r )+1.5}, \x+7);

\draw [domain=.6:2] plot ({.5*cos(pi* \x r )+1.5}, \x+8);
\draw [domain=.6:1] plot ({.5*cos(pi* \x r )+1.5}, \x+9);

\draw [rounded corners] (1,6) -- (1,1) -- (11,1)--(11,3);
\draw [rounded corners] (2,6) -- (2,2) -- (10,2)--(10,3);

\draw [rounded corners] (1,10) -- (1,16) -- (11,16)--(11,14);
\draw [rounded corners] (2,10) -- (2,15) -- (10,15)--(10,14);

\foreach \a in {5,8,11}{
	\draw (10,\a) -- (10,\a+1);
	\draw (11,\a) -- (11,\a+1);
	}

\draw [fill] (10.5,6.7) circle [radius=0.05];	
\draw [fill] (10.5,7) circle [radius=0.05];	
\draw [fill] (10.5,7.3) circle [radius=0.05];	

\node at (0.5,8) {$e$};

\node at (10.5, 13) {$\beta_1,\alpha_1$};
\node at (10.5, 10) {$\beta_2,\alpha_2$};
\node at (10.5, 4) {$\beta_n,\alpha_n$};

\end{tikzpicture}
\caption{The Montesinos link $M(\frac{\beta_1}{\alpha_1},\frac{\beta_2}{\alpha_2},\ldots,\frac{\beta_n}{\alpha_n}\vert e)$}

\end{figure}
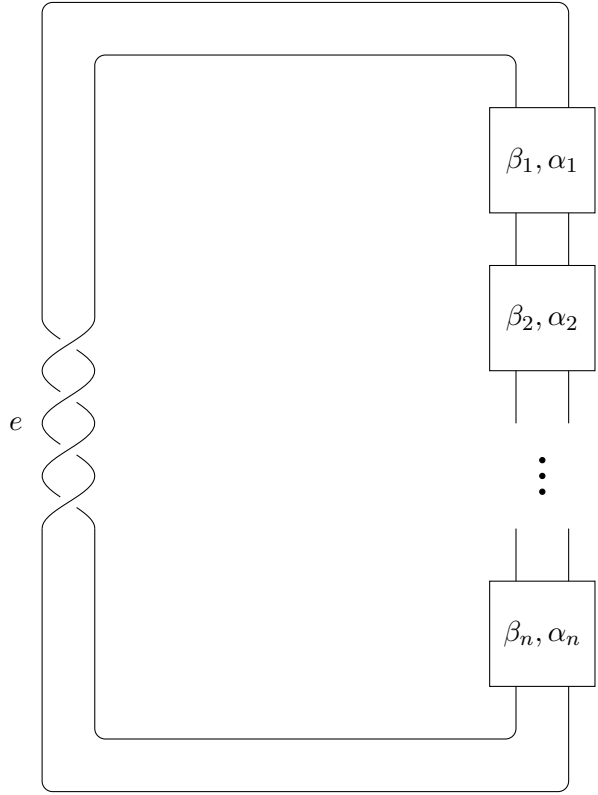

Lustig and Moriah in \cite{LM} defined the class of knots which we will describe in the rest of this section.  They based their definition off of Boileau and Zieschang \cite{BZ}, who prove that any Montesinos knot, which does not have integer tangles, has bridge number $n$. Consider figure \ref{fig:gmk}. Each $\alpha_{i,j},\beta_{i,j}$ is the 4-plat diagram from the rational tangle defined by $\alpha_{i,j}/\beta_{i,j}$ and each $B_j$ is a double of an $n$-braid. For a more in depth conversation, see \cite{BZ} and \cite{LM}. They exhibit a number diagrams which show that every Montesinos Knot and in particular, every pretzel knot, is a Generalized Montesinos Knot. One should note that for a pretzel knot $K_n=(p_1,\ldots, p_n)$, the corresponding rational tangles are $p_k=\alpha_{i,j}/\beta_{i,j}$.  Then define $\alpha=\text{gcd}(\alpha_{i,j}:i=1,\ldots,\ell, j=1,\ldots,m)$

\begin{definition} 
A Generalized Montesinos knot or link is the knot in Figure 4 where each $\beta_{i,j},\alpha_{i,j}$ for $i=1,\ldots, m$ and $j=1,\ldots, \ell$ represents a rational tangle given by $\frac{\beta_{i,j}}{\alpha_{i,j}}$, and $B_i$ represents a $2n$-braid, which is obtained by doubling an $n$-braid.

\end{definition}

The main result from Lustig and Moriah's paper that we will use here is the following:

\begin{theorem} \label{LMt} [ \cite{LM} theorem 0.1 ]
Let $K$ be a generalized Montesinos knot/link as in Figure 1 below, with $2n$-plats. Let  $\alpha=\text{gcd}(\alpha_{i,j}:i=1,\ldots,\ell, j=1,\ldots,m)$.  If $\alpha \not = 1$ then rk$(\pi_1(S^3-K))=t(K)+1=b(K)=n$.
\end{theorem}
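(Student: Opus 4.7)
The goal is the four-term equality $\mathrm{rk}(\pi_1(S^3-K)) = t(K)+1 = b(K) = n$, and my plan is to establish it by proving the chain
\[
n \leq \mathrm{rk}(\pi_1(S^3-K)) \leq t(K)+1 \leq b(K) \leq n,
\]
which forces all inequalities to be equalities. Three of these are essentially general facts about knots; the lower bound on the rank is where the hypothesis $\alpha \neq 1$ enters, and that will be the main obstacle.

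The inequality $b(K) \leq n$ is visible from the diagram: Figure 4 is a $2n$-plat presentation, so a horizontal sphere cutting the diagram in half meets $K$ in $2n$ points and exhibits it as an $n$-bridge knot. The inequality $t(K)+1 \leq b(K)$ is classical: from any $b$-bridge presentation one obtains an unknotting tunnel system of size $b-1$ by choosing small arcs connecting consecutive maxima above the bridge sphere, so $t(K) \leq b(K)-1$. The inequality $\mathrm{rk}(\pi_1(S^3-K)) \leq t(K)+1$ is also standard: a Heegaard splitting of $E(K)$ of genus $t(K)+1$ (which exists by the definition of $t(K)$ given in Section~\ref{subsec:tunnel}) presents $\pi_1(E(K))$ as a quotient of a free group of rank $t(K)+1$.

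The hard step is proving $\mathrm{rk}(\pi_1(S^3-K)) \geq n$, and my plan is to build a surjection from $\pi_1(S^3-K)$ onto a group $G$ that provably needs at least $n$ generators, using $\alpha \neq 1$ in an essential way. A natural candidate is the $n$-fold free product $G = (\Z/\alpha) * \cdots * (\Z/\alpha)$, which by Grushko's theorem has rank exactly $n$ whenever $\alpha \geq 2$. To construct the surjection, I would start from a Wirtinger-type presentation of $\pi_1(S^3-K)$ adapted to the $2n$-plat diagram, in which the meridians fall into $n$ ``columns'' corresponding to the $n$ stacks of rational tangles. Because each $B_j$ is obtained by doubling an $n$-braid, its action on meridians is coarse enough that in an appropriate quotient the braid boxes can be made trivial. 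The relations coming from each rational tangle $\beta_{i,j}/\alpha_{i,j}$, together with the divisibility $\alpha \mid \alpha_{i,j}$, then impose that each column's generator has order dividing $\alpha$, so each column contributes a copy of $\Z/\alpha$ to the quotient.

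The delicate point, and the place I expect the real work to be, is verifying that nothing collapses across columns: one must exhibit the quotient as a genuine free product, not a direct product or an amalgam that lowers the rank. This requires a careful tracking of how the doubled braids $B_j$ permute meridians between columns and a proof that, modulo the chosen relations, no nontrivial word mixing columns becomes trivial. Once this is checked, composing with the surjection $G \to (\Z/\alpha)^{*n}$ and applying Grushko gives $\mathrm{rk}(\pi_1(S^3-K)) \geq n$ and completes the chain.
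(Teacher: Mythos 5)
First, a framing remark: the paper does not prove this statement at all --- it is quoted verbatim as Theorem 0.1 of Lustig and Moriah \cite{LM} and used as a black box, so there is no internal proof to compare against. Judged on its own terms, your skeleton is the right shape: forcing equality through the chain $n \leq \mathrm{rk}(\pi_1(S^3-K)) \leq t(K)+1 \leq b(K) \leq n$, where $b(K)\leq n$ comes from the $2n$-plat diagram, $t(K)\leq b(K)-1$ is the paper's Proposition~\ref{prop:MSYsprop} with $g=0$, and $\mathrm{rk}(\pi_1(E(K)))\leq t(K)+1$ holds because $E(K)$ is a genus $t(K)+1$ handlebody with $2$-handles attached. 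You also correctly identify the rank lower bound as the only place $\alpha\neq 1$ can enter.

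However, your proposed proof of $\mathrm{rk}(\pi_1(S^3-K))\geq n$ cannot work as stated, and the obstruction is not the ``delicate point'' you flag (collapsing across columns) but something more basic. A surjection $\pi_1(S^3-K)\twoheadrightarrow G$ induces a surjection on abelianizations, and $H_1(S^3-K)\cong\Z$ for a knot, so every quotient of a knot group has cyclic abelianization. Your target $G=(\Z/\alpha)\ast\cdots\ast(\Z/\alpha)$ has abelianization $(\Z/\alpha)^n$, which is not cyclic once $n\geq 2$ and $\alpha\geq 2$; hence no such surjection exists, and no amount of care with the doubled braids $B_j$ will produce one. (Equivalently: all Wirtinger meridians are conjugate, so their images in any quotient are mutually conjugate, whereas generators of distinct free factors of $G$ are not.) This is exactly why the lower bound is the genuinely hard part of \cite{LM}: Lustig and Moriah do not use a free-product quotient but their ``$N$-torsion'' invariant of the relation/Alexander module over $\Z[t,t^{-1}]$, computed via Fox calculus, showing that $\alpha\neq 1$ forces the module to split off $n-1$ torsion summands and hence forces $\mathrm{rk}\geq n$. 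If you want a quotient-based argument, the natural candidate is the $\pi$-orbifold group $\pi_1(S^3-K)/\langle\langle m^2\rangle\rangle$ used by Boileau--Zieschang \cite{BZ}, but that technique controls only the meridional rank (and hence the bridge number), not the rank over arbitrary generating sets, which is the whole point of the Lustig--Moriah theorem.
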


\newcount \mycount

\begin{figure}\label{fig:gmk}
 
 \begin{tikzpicture}[transform shape]
 
 

\draw (0,3.5) rectangle +(13,2);
\draw (0,7.5) rectangle +(13,2);
\draw (0,12.5) rectangle +(13,2);


\node at (1,2) {$\alpha_{\ell,1},\beta_{\ell,1}$};
\node at (4,2) {$\alpha_{\ell,2},\beta_{\ell,2}$};
\node at (7,2) {$\alpha_{\ell,3},\beta_{\ell,3}$};
\node at (12,2) {$\alpha_{\ell,m},\beta_{\ell,m}$};

\node at (7,4.5) {$B_{\ell-1}$};
\node at (7,8.5) {$B_{2}$};
\node at (7,13.5) {$B_{1}$};

\foreach \d [count = \di] in {16,11}{
	\node at (12,\d) {$\alpha_{\di,m},\beta_{\di,m}$};
	
	\foreach \e [count = \ei] in {0,3,6}{
		\node at (\e+1,\d) {$\alpha_{\di,\ei},\beta_{\di,\ei}$};

		}
	}

\foreach \a [count = \ai] in {0,3,6,11}{
	\draw (\a,1) rectangle +(2,2);
	\draw (\a,10) rectangle +(2,2);
	\draw (\a,15) rectangle +(2,2);

	\draw [domain=180:360] plot ({\a+.35+.15*cos(\x)}, {.5+.15*sin(\x)});	
	\draw [domain=180:360] plot ({\a+1.65+.15*cos(\x)}, {.5+.15*sin(\x)});

	\draw [domain=0:180] plot ({\a+.35+.15*cos(\x)}, {17.5+.15*sin(\x)});	
	\draw [domain=0:180] plot ({\a+1.65+.15*cos(\x)}, {17.5+.15*sin(\x)});

	\foreach \b in {.5,3,5.5,7,9.5,12,14.5,17}{
		\draw (\a+.2,\b) -- (\a+.2,\b+.5);
		\draw (\a+.5,\b) -- (\a+.5,\b+.5);

		\draw (\a+1.8,\b) -- (\a+1.8,\b+.5);
		\draw (\a+1.5,\b) -- (\a+1.5,\b+.5);
	}
	
	}
\foreach \c in {2,11,16}{
	
	\draw [fill] (9,\c) circle [radius=0.05];	
	\draw [fill] (9.5,\c) circle [radius=0.05];	
	\draw [fill] (10,\c) circle [radius=0.05];	
	}
	
\draw [fill] (7,6.2) circle [radius=0.05];	
\draw [fill] (7,6.5) circle [radius=0.05];	
\draw [fill] (7,6.8) circle [radius=0.05];


\end{tikzpicture}

\caption{A Generalized Monetesinos Knot}

\end{figure}
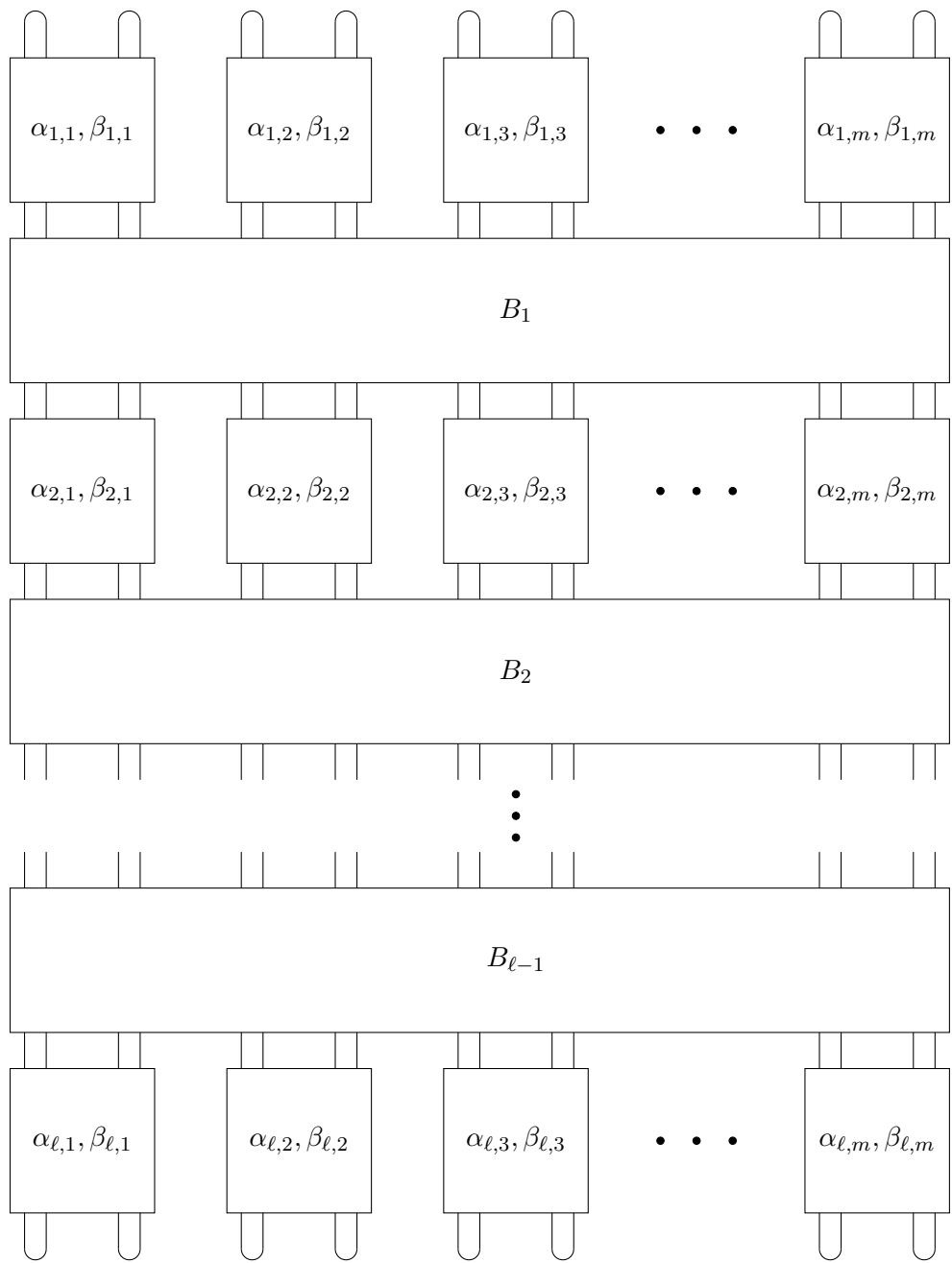
 




\section{Results}\label{sec:results}


We first begin with some basic results that are used many places. The first proposition appears in \cite{MSY} without proof.  We provide one here for completeness. 

\begin{proposition} \label{prop:MSYsprop}
Given a knot with a $(g,b)$-splitting with $b>0$, or if $b=0$ and the knot is primitive, we have the following relation: $t(K)\leq g+b-1$.

\end{proposition}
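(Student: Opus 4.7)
The plan is to use Morimoto's equivalent characterization of the tunnel number recalled in Section 2.2: $t(K)$ equals the minimum, over all Heegaard splittings $(U_1,U_2)$ of $S^3$ for which $K$ is a core of a handle of $U_1$, of $g(U_1)-1$. So it suffices to produce such a splitting of genus exactly $g+b$.

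First I will handle the case $b>0$. Meridional stabilization, as described in Section~\ref{subsec:bridgespectrum}, converts a $(g,b)$-splitting into a $(g+1,b-1)$-splitting. Applying it $b-1$ times produces a $(g+b-1,1)$-splitting $(W_1,\alpha)\cup_{\Sigma'}(W_2,\gamma)$ of $K$, where $\alpha$ and $\gamma$ are the unique trivial arcs in $W_1$ and $W_2$ respectively and $K=\alpha\cup\gamma$.

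Next I will amalgamate $\gamma$ into $W_1$: set $U_1=W_1\cup N(\gamma)$ and $U_2=W_2-\eta(\gamma)$. Because $\gamma$ is a trivial arc with a bridge disk in $W_2$, the neighborhood $N(\gamma)$ is a $3$-ball meeting $\partial W_1$ in two disks, so $U_1$ is $W_1$ with a single $1$-handle attached and hence a handlebody of genus $g+b$. The same triviality guarantees (by the genus calculation already used in meridional stabilization) that $U_2$ is also a handlebody of genus $g+b$. Thus $(U_1,U_2)$ is a genus $g+b$ Heegaard splitting of $S^3$.

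To finish the $b>0$ case I will verify that $K$ is a core of a handle of $U_1$. The bridge disk for $\alpha$ lies in $W_1\subset U_1$, so I can isotope $\alpha$ through this disk to an arc $\alpha'\subset\partial W_1$ connecting the two attaching disks of $N(\gamma)$ on $\partial W_1$. After this ambient isotopy, $K$ becomes $\alpha'\cup\gamma$: an arc on $\partial W_1$ together with the core arc of the $1$-handle $N(\gamma)$, which is exactly a core circle of that $1$-handle in the handle decomposition of $U_1$. Consequently $U_1-\eta(K)$ is a handlebody of genus $g+b-1$, and Morimoto's criterion yields $t(K)\leq g+b-1$.

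For the case $b=0$ with $K$ primitive, the knot already lies on the genus $g$ Heegaard surface $\Sigma$, and primitivity in some $V_i$ means by definition that there is an essential disk $D\subset V_i$ with $|\partial D\cap K|=1$. This disk exhibits $K$ as isotopic to a core circle of one of the $1$-handles of $V_i$, so the original splitting $(V_1,V_2)$ already realizes $K$ as a core of a handle of a genus $g$ Heegaard splitting, and Morimoto's criterion gives $t(K)\leq g-1=g+b-1$. The main obstacle in either case is verifying that the relevant handle-structure assertions hold honestly and not merely up to homotopy — concretely, that drilling a trivial arc from a handlebody returns a handlebody (of one higher genus) and that the isotoped arc $\alpha'$ really closes $\gamma$ into a core circle rather than a more complicated curve — but both facts follow from a careful bookkeeping of the bridge disks already implicit in the meridional stabilization argument.
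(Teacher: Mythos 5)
Your proof is correct and takes essentially the same route as the paper: meridionally stabilize until the knot becomes a core of a handle of a genus $g+b$ Heegaard splitting and invoke Morimoto's characterization of tunnel number, with the primitive disk serving as the meridian disk in the $b=0$ case. You simply supply more of the bookkeeping (the final amalgamation of the last trivial arc and the isotopy of $\alpha$ into the boundary) that the paper's shorter argument leaves implicit.
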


\begin{proof}

When $b>0$, using the definition in \cite{Morimoto}, we can meridionally stabilize each of the $b$ arcs.  This takes the genus $g$ surface and adds in $b$ more handles, which gives us a $g+b$ genus surface. This is precisely the situation in Morimoto's definition. Hence, the tunnel number is at most $g+b-1$. If $b=0$ and the knot is primitive, then there is an essential disk which intersects the knot exactly once. This essential disk  must be a meridian disk and thus, our knot must be a tunnel number at most $g-1$. 
\end{proof}

This is the driving force behind the following well known proposition:

\begin{proposition} \label{prop:stairstep}
Given a knot $K$ with $t(K)+1=b(K)=b_0(K)$,  $K$ has the stair-step primitive bridge spectrum, ${\bf \hat{b}}(K)=(\hat{b}_0(K),\hat{b}_0(K)-1,\ldots,2,1,0)$.
\end{proposition}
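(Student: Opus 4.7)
The plan is to establish the equality $\hat{b}_g(K) = \hat{b}_0(K) - g$ for $0 \le g \le \hat{b}_0(K)$ by squeezing it between two matching bounds. The upper bound comes from iterated meridional stabilization starting at a minimal $(0,\hat{b}_0(K))$-splitting; the lower bound comes directly from Proposition~\ref{prop:MSYsprop} combined with the hypothesis $t(K)+1 = b_0(K)$. Note first that $\hat{b}_0(K) = b_0(K)$, since $b_0(K) = t(K)+1 \ge 1$ forces the genus zero case to have at least one bridge, so no primitivity condition is in play at $g=0$.

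For the upper bound I would take any $(0,\hat{b}_0(K))$-bridge splitting of $K$ and apply the meridional stabilization process from Section~\ref{subsec:bridgespectrum} repeatedly. Each application turns a $(g,b)$-splitting into a $(g+1,b-1)$-splitting, so after $g$ iterations one obtains a $(g,\hat{b}_0(K)-g)$-splitting. When $g = \hat{b}_0(K)$ and we reach a $0$-bridge presentation, I would check that the knot is primitive in the resulting genus $\hat{b}_0(K)$ Heegaard surface: the cocore of the handle attached in the last stabilization step provides an essential disk in one of the handlebodies that meets the knot transversally in a single point, which is exactly the primitivity required for $\hat{b}_{\hat{b}_0(K)}(K) = 0$. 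This yields $\hat{b}_g(K) \le \hat{b}_0(K) - g$ for every such $g$.

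For the matching lower bound, fix any $(g,\hat{b}_g(K))$-splitting realizing the primitive genus $g$ bridge number. By the definition of $\hat{b}_g$, this splitting satisfies the hypothesis of Proposition~\ref{prop:MSYsprop}: either $\hat{b}_g(K) > 0$, or $\hat{b}_g(K) = 0$ and the knot is primitive in the surface. Proposition~\ref{prop:MSYsprop} then gives $t(K) \le g + \hat{b}_g(K) - 1$, and substituting $t(K) = b_0(K) - 1 = \hat{b}_0(K) - 1$ yields $\hat{b}_g(K) \ge \hat{b}_0(K) - g$. Combining the two inequalities produces equality on the nose for $0 \le g \le \hat{b}_0(K)$, and the tail of the spectrum is forced to be identically $0$ for $g \ge \hat{b}_0(K)$.

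The main obstacle is the primitivity bookkeeping at the terminal step of the meridional stabilization chain. The rest is either definitional or a direct application of Proposition~\ref{prop:MSYsprop}, but one must argue carefully that when the bridge count reaches zero the resulting $(\hat{b}_0(K),0)$-splitting satisfies the stronger primitivity condition, so that the upper bound genuinely applies to the hat-invariant $\hat{b}_g$ and not merely to $b_g$.
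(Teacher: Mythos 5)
Your proof is correct and follows essentially the same route as the paper: the lower bound $\hat{b}_g(K)\ge \hat{b}_0(K)-g$ from Proposition~\ref{prop:MSYsprop} together with $t(K)=b_0(K)-1$, and the matching upper bound from iterated meridional stabilization. You are in fact more careful than the paper at the terminal step, where you verify via the cocore of the last attached handle that the $(\hat{b}_0(K),0)$-splitting is primitive, so the stair-step upper bound really applies to $\hat{b}_g$ and not just $b_g$ --- a point the paper's two-line proof leaves implicit.
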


\begin{proof}

By proposition \ref{prop:MSYsprop}, for any $(g,b)$-splitting,  $t(K)\leq g+b -1$, and thus, $\hat{b}_0(K)-1\leq g+b-1$.  As stated earlier that every spectrum is bounded above by the stair-step spectrum. Hence, $g+b\leq \hat{b}_0(K) \iff g+b-1\leq \hat{b}_0(K)-1$.
\end{proof}

\begin{theorem}\label{thm:bsgmk}
A generalized Montesinos knot or link, $K$ with $\alpha \not = 1$ has the stair-step primitive bridge spectrum, ${\bf \hat{b}}(K)=(\hat{b}_0(K),\hat{b}_0(K)-1,\ldots,2,1,0)$.

\end{theorem}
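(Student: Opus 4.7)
The plan is that this theorem is essentially a one-line corollary of Theorem \ref{LMt} combined with Proposition \ref{prop:stairstep}, so the proof proposal is to assemble these two ingredients.

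First I would invoke Theorem \ref{LMt} of Lustig and Moriah, which applies exactly under the hypothesis $\alpha\neq 1$. It tells us that for the generalized Montesinos knot/link $K$, we have the equality
\[
t(K)+1 = b(K) = n,
\]
where $n$ is the half-plat parameter. Next I would reconcile the notation: the classical bridge number $b(K)$ defined by Schubert is exactly the genus zero bridge number $b_0(K)$, as noted in Section \ref{subsec:bridgespectrum}, and in the primitive convention this equals $\hat{b}_0(K)$ since $\hat{b}_0$ and $b_0$ differ only in whether the value $0$ is allowed, and here both are at least $n\geq 2$. Thus the hypothesis of Proposition \ref{prop:stairstep}, namely $t(K)+1=b(K)=\hat{b}_0(K)$, is satisfied.

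Now I would apply Proposition \ref{prop:stairstep} directly to conclude that $K$ has the stair-step primitive bridge spectrum
\[
{\bf \hat{b}}(K) = (\hat{b}_0(K),\hat{b}_0(K)-1,\ldots,2,1,0).
\]
This completes the proof.

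There is no real obstacle here; the theorem is stated as a synthesis result. The only small thing to be careful about is the distinction between $b_0$ and $\hat{b}_0$, and the distinction between the classical bridge number $b(K)$ (in the Lustig--Moriah statement) and the genus zero bridge number used in the bridge spectrum. In the regime $\alpha\neq 1$ with $n\geq 2$ all of these coincide, so the identification is clean and no additional argument is needed.
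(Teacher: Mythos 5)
Your proposal is correct and matches the paper's argument exactly: the paper's proof is the one-line observation that the theorem is immediate from Theorem \ref{LMt} and Proposition \ref{prop:stairstep}. Your extra care in reconciling $b(K)$, $b_0(K)$, and $\hat{b}_0(K)$ is a reasonable refinement that the paper leaves implicit.
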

 
\begin{proof} This is immediate from the previous proposition and theorem \ref{LMt}.
\end{proof}

Notice that theorem \ref{thm:bsgmk} implies any pretzel knot $K_n=K_n(p_1,\ldots,p_n)$ also has the primitive stair step bridge spectrum if $\alpha=\text{gcd}(p_1,\ldots,p_n)\not = 1$.  It should be noted that pretzel knots $K_n$ with $\alpha \not = 1$, while having stair-step primitive bridge spectra, have the bridge spectra ${\bf b}(K_n)=(n,n-1,\ldots, 3,2,0)$. This is because any pretzel knot embeds into a genus $n-1$ surface. See figure~\ref{fig:pretz_embed}.  These facts prove the following corollary.

\begin{corollary}\label{cor:bsp}
Given a $K_n=K_n(p_1,\ldots,p_n)$ pretzel knot with $\gcd(p_1,\ldots,p_n)\not = 1$ the primitive bridge spectrum is stair-step, i.e. ${\bf \hat{b}}(K_n)=(n,n-1,\ldots,2,1,0)$ and ${\bf b}(K_n)=(n,n-1,\ldots,3,2,0)$.
\end{corollary}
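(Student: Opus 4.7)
The plan is short since the paper has already assembled most of the needed facts. For the primitive spectrum $\hat{\mathbf{b}}(K_n) = (n, n-1, \ldots, 2, 1, 0)$, I would apply Theorem~\ref{thm:bsgmk} directly: as recorded in Section~\ref{subsec:GMK}, a pretzel knot $K_n(p_1, \ldots, p_n)$ is a generalized Montesinos knot in which each $p_k$ appears as the integer rational tangle $\alpha_{i,j}/\beta_{i,j}$, so the hypothesis $\gcd(p_1, \ldots, p_n) \neq 1$ is exactly the condition $\alpha \neq 1$ in Theorem~\ref{LMt}. That theorem gives $\hat{b}_0(K_n) = b(K_n) = n$, and Theorem~\ref{thm:bsgmk} then yields the claimed stair-step primitive spectrum.

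For $\mathbf{b}(K_n) = (n, n-1, \ldots, 3, 2, 0)$, my first step is to observe that $b_g(K) = \hat{b}_g(K)$ whenever $\hat{b}_g(K) \geq 2$. Indeed, $\hat{b}_g \geq b_g$ always; conversely, if $b_g(K) = 0$ then the knot embeds in some genus~$g$ Heegaard surface, and a small perturbation of a short arc into one handlebody turns this into a $(g,1)$-splitting, which forces $\hat{b}_g(K) \leq 1$. Combined with the first part, this already gives $b_g(K_n) = n - g$ for all $0 \leq g \leq n - 2$.

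What remains is to verify $b_{n-1}(K_n) = 0$, i.e.\ that $K_n$ embeds in some genus $n-1$ Heegaard surface of $S^3$. The plan, indicated by Figure~\ref{fig:pretz_embed}, is to take the boundary of a regular neighborhood of the graph on two vertices joined by $n$ parallel arcs (a standard genus $n-1$ handlebody in $S^3$), place each twist region $p_k$ on one of the $n$ tubes, and verify that the resulting curve on this surface is precisely $K_n(p_1, \ldots, p_n)$. This explicit diagrammatic check is the main and only real obstacle in the argument; the bound $b_{n-1}(K_n) \leq 1$ is already guaranteed by the stair-step primitive spectrum together with $\hat{b}_g \geq b_g$, so the embedding is needed only to drop the final nonzero entry from $1$ to $0$, after which the strictly decreasing convention truncates the list.
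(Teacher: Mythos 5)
Your proposal is correct and follows essentially the same route as the paper: the primitive spectrum comes from Theorem~\ref{thm:bsgmk} applied to the pretzel knot viewed as a generalized Montesinos knot with $\alpha=\gcd(p_1,\ldots,p_n)$, and the final entry of ${\bf b}(K_n)$ drops to $0$ because $K_n$ embeds in a genus $n-1$ Heegaard surface as in Figure~\ref{fig:pretz_embed}. Your intermediate observation that $b_g=\hat{b}_g$ whenever $\hat{b}_g\geq 2$ is a correct detail the paper leaves implicit, but it does not change the argument.
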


\begin{question}
Are pretzel knots exactly the class of Montesinos knots which have the property that ${\bf \hat{b}}(K)\not={\bf {b}}(K)$?
\end{question}


\begin{figure}\label{fig:pretz_embed}
\begin{tikzpicture}


\filldraw[fill=lightgray]
 (0,6) to [out=90,in=180] (2,8) -- (12,8) to [out=0, in=90] (14,6)--(14,2) to [out=270, in=0] (12,0) -- (2,0) to [out=180,in=270] (0,2) -- (0,6) --cycle
 
 \foreach \a in {2,6,10}{
(2+\a,5) to [out=90, in=0] (1+\a,6) to [out=180,in=90] (\a,5) -- (\a,3) to [out=270,in=180] (\a+1,2) to [out=0,in=270] (2+\a,3) -- (2+\a,5)--cycle};

\draw [thick] (2/3,2.5) to [out=270, in=180] (3,2/3) -- (11,2/3) to [out=0,in=270] (13+1/3,2.5);
\draw [thick] (2/3,5.5) to [out=90, in=180] (3,8-2/3) -- (11,8-2/3) to [out=0,in=90] (13+1/3,5.5);

\foreach \a in {0,4,8}{
\draw [thick] (4/3+\a,2.5) to [out=270, in=180] (3+\a,4/3) -- (3+\a,4/3) to [out=0,in=270] (5-1/3+\a,2.5);
\draw [thick] (4/3+\a,5.5) to [out=90, in=180] (3+\a,8-4/3) -- (3+\a,8-4/3) to [out=0,in=90] (5-1/3+\a,5.5);
}

\draw [thick] (4/3,2.5) to [out=90,in=270] (2,10/3);
\draw [thick, dashed] (2,10/3) to [out=90,in=270] (0,13/3);
\draw [thick] (0,13/3) to [out=90,in=270] (4/3,5.5);

\draw [thick] (2/3,2.5) to [out=90,in=270] (2,11/3);
\draw [thick, dashed] (2,11/3) to [out=90,in=270] (0,14/3);
\draw [thick] (0,14/3) to [out=90,in=270] (2/3,5.5);

\draw [thick] (16/3,2.5) to [out=90,in=270] (4,10/3);
\draw [thick, dashed] (4,10/3) to [out=90,in=270] (6,5.5-5/6);
\draw [thick] (6,5.5-5/6) to [out=90,in=270] (14/3,5.5);

\draw [thick] (14/3,2.5) to [out=90,in=270] (4,3);
\draw [thick, dashed] (4,3) to [out=90,in=270] (6,4-1/3);
\draw [thick] (6,4-1/3) to [out=90,in=270] (4,4+1/3);
\draw [thick,dashed] (4,4+1/3) to [out=90,in=270] (6,5);
\draw [thick] (6,5) to [out=90,in=270] (16/3,5.5);

\begin{scope}[xshift=4cm]
\draw [thick] (16/3,2.5) to [out=90,in=270] (4,10/3);
\draw [thick, dashed] (4,10/3) to [out=90,in=270] (6,5.5-5/6);
\draw [thick] (6,5.5-5/6) to [out=90,in=270] (14/3,5.5);

\draw [thick] (14/3,2.5) to [out=90,in=270] (4,3);
\draw [thick, dashed] (4,3) to [out=90,in=270] (6,4-1/3);
\draw [thick] (6,4-1/3) to [out=90,in=270] (4,4+1/3);
\draw [thick,dashed] (4,4+1/3) to [out=90,in=270] (6,5);
\draw [thick] (6,5) to [out=90,in=270] (16/3,5.5);
\end{scope}

\begin{scope}[xscale=-1,xshift=-18cm]
\draw [thick] (16/3,2.5) to [out=90,in=270] (4,10/3);
\draw [thick, dashed] (4,10/3) to [out=90,in=270] (6,5.5-5/6);
\draw [thick] (6,5.5-5/6) to [out=90,in=270] (14/3,5.5);

\draw [thick] (14/3,2.5) to [out=90,in=270] (4,3);
\draw [thick, dashed] (4,3) to [out=90,in=270] (6,4-1/3);
\draw [thick] (6,4-1/3) to [out=90,in=270] (4,4+1/3);
\draw [thick,dashed] (4,4+1/3) to [out=90,in=270] (6,5);
\draw [thick] (6,5) to [out=90,in=270] (16/3,5.5);
\end{scope}

\end{tikzpicture}

\caption{A pretzel knot $K_4(2,-3,-3,3)$ embedding on a genus three surface.}

\end{figure}
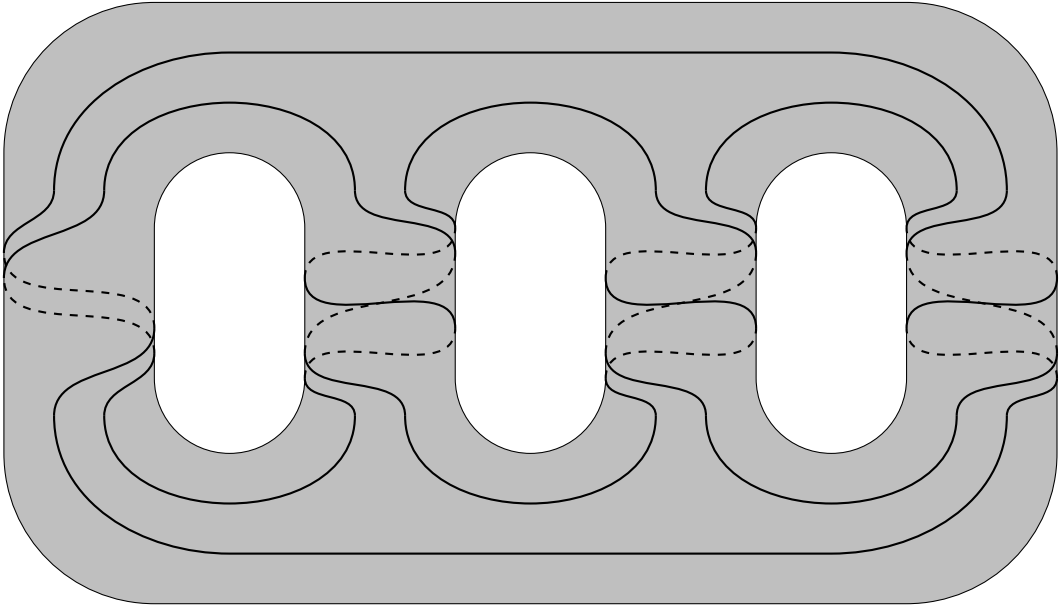


One should ask what happens when $\alpha=1$, to which we do not have a complete answer here. Morimoto, Sakuma, and Yokota in \cite{MSY} show that there are Montesinos knots $K$ which have $\alpha =1$ and $t(K)+2=b(K)$.  Here is their theorem as stated by Hirasawa and Murasugi in \cite{HM}:

\begin{theorem} \label{MSYtunnel}

The Montesinos knot $K=M(\frac{\beta_1}{\alpha_1},\frac{\beta_2}{\alpha_2},\ldots,\frac{\beta_r}{\alpha_r}\vert e)$ has tunnel number one if and only if one of the following conditions is satisfied.

\begin{enumerate}
\item $r=2.$
\item $r=3, \beta_2/\alpha_2 \equiv \beta_3/\alpha_3 \equiv \pm \frac{1}{3}$ in $\Q/ \Z$, and $e+\sum^3_{i=1}  \beta_i/\alpha_i = \pm 1/(3\alpha_1)$.
\item $r=3, \alpha_2$ and $\alpha_3$ are odd, and $\alpha_1=2$.

\end{enumerate}
\end{theorem}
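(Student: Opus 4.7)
The plan is to use the 2-fold branched cover $\Sigma_2(K)$, which for a Montesinos knot $K=M(\beta_1/\alpha_1,\ldots,\beta_r/\alpha_r\mid e)$ is the Seifert fibered space over $S^2$ with exceptional fibers of multiplicities $\alpha_1,\ldots,\alpha_r$ and a rational Euler number determined by $e$ and the $\beta_i/\alpha_i$. This is the natural setting because tunnel number and equivariant Heegaard genus are tightly linked: an unknotting tunnel $\tau$ for $K$ lifts to a genus-2 Heegaard splitting of $\Sigma_2(K)$ that is invariant under the covering involution, and conversely an invariant genus-2 splitting of $\Sigma_2(K)$ descends to an unknotting tunnel of $K$.

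For the ``if'' direction I would construct explicit tunnels. When $r=2$ the knot collapses to a 2-bridge knot (the two rational tangles merge into a single rational tangle under the $e$-twist), and 2-bridge knots have tunnel number one by the classical Schubert decomposition. When $\alpha_1=2$ (case (3)) the tangle $\beta_1/\alpha_1$ is essentially a single crossing, and I would exhibit an explicit arc through this tangle whose regular neighborhood drills $E(K)$ down to a genus-2 handlebody, with the oddness of $\alpha_2,\alpha_3$ used to verify the handlebody condition by a cut-and-paste disk argument. In case (2) the hypotheses $\beta_2/\alpha_2\equiv\beta_3/\alpha_3\equiv\pm 1/3$ impose an approximate 3-fold symmetry on the two cyclic tangles; I would construct a symmetric unknotting arc and use the equality $e+\sum\beta_i/\alpha_i=\pm 1/(3\alpha_1)$ to show that the resulting complement is indeed a genus-2 handlebody (the condition is precisely what pins down the meridional framing).

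For the ``only if'' direction, the strategy is to assume $t(K)=1$, lift to obtain $g(\Sigma_2(K))\le 2$, and then apply the classification of Seifert fibered spaces of Heegaard genus at most two (Boileau--Zimmermann, with earlier contributions by Moriah--Schultens) to restrict the multiplicities $\alpha_1,\ldots,\alpha_r$. The genus-2 Seifert fibered spaces over $S^2$ force $r\le 3$, and when $r=3$ they force either one multiplicity equal to $2$ (giving case (3), together with a parity condition on the remaining multiplicities) or two multiplicities equal to $3$ with a specific rational Euler number (giving case (2)). Reversing the double-cover recipe then converts the Seifert data back into Montesinos parameters and pins down the exact arithmetic equalities.

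The main obstacle I expect is in the ``only if'' direction, specifically the arithmetic recovery of the equality $e+\sum\beta_i/\alpha_i=\pm 1/(3\alpha_1)$ in case (2) from the rational Euler number of $\Sigma_2(K)$; a miscount in the integer $e$-framing would shift this by a multiple of $1/\alpha_1$ and destroy the statement, so the bookkeeping has to be performed with care. A secondary obstacle is the passage from an arbitrary genus-2 splitting of $\Sigma_2(K)$ to an \emph{equivariant} one; this step relies on uniqueness-type results for Heegaard splittings of Seifert fibered spaces together with equivariant surgery techniques, and it is the reason the theorem is genuinely hard rather than being a routine translation into the language of Seifert spaces.
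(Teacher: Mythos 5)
This theorem is not proved in the paper at all: it is quoted verbatim from Morimoto--Sakuma--Yokota \cite{MSY} (in the form given by Hirasawa--Murasugi \cite{HM}), so there is no in-paper argument to compare yours against. Judged on its own terms, your outline identifies the correct framework --- the double branched cover $\Sigma_2(K)$ is the Seifert fibered space over $S^2$ with exceptional fibers of multiplicities $\alpha_1,\ldots,\alpha_r$, and tunnel number one forces an invariant genus-two Heegaard splitting upstairs --- and this is indeed the setting in which the actual proof lives. Your ``if'' direction is also essentially right in spirit: case (1) is just the 2-bridge case, and cases (2) and (3) are handled by exhibiting explicit tunnels, though ``construct a symmetric unknotting arc'' is a placeholder rather than an argument.

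The genuine gap is in your ``only if'' direction. You claim that the classification of Seifert fibered spaces of Heegaard genus at most two forces, when $r=3$, either one multiplicity equal to $2$ or two multiplicities equal to $3$. This is false: \emph{every} Seifert fibered space over $S^2$ with exactly three exceptional fibers has Heegaard genus at most two (Boileau--Zieschang exhibit vertical genus-two splittings for all of them), so the bound $g(\Sigma_2(K))\le 2$ yields only $r\le 3$ and imposes no restriction whatsoever on the $\alpha_i$ or on $e$. In particular none of the arithmetic in conditions (2) and (3) can be recovered this way. Those conditions come from the much harder step that you relegate to a ``secondary obstacle'': one must classify the genus-two Heegaard splittings of these Seifert fibered spaces up to isotopy and determine which of them are compatible with the specific hyperelliptic involution realizing the manifold as the double branched cover of the given Montesinos presentation (equivalently, classify the unknotting tunnels directly). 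That equivariance analysis is not bookkeeping --- it is the entire content of the theorem, and without it your strategy proves only that $r\le 3$ is necessary, not that (2) or (3) must hold when $r=3$.
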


Any pretzel link $K_3(p_1,p_2,2)$ is a knot if and only if $p_1$ and $p_2$ are odd.  Therefore, by theorem \ref{MSYtunnel}, every pretzel knot of this form will be tunnel number 1 but bridge number 3.  Thus, we cannot use proposition \ref{prop:stairstep} to compute the bridge spectrum. It is unknown if there is a pretzel knot or Montesinos knot which has bridge spectrum $(3,1,0)$.

Another point of interest is that any $n$-pretzel knot $K_n$ with $n\geq4$ is a distance one knot.

\begin{proposition} \label{prop:dist1}
If $K_n(p_1,\ldots,p_n)$ is a pretzel knot with $n\geq 4$,  and $P$ a genus zero bridge surface for $K_n$, then $d(P,K_n)=1$.
\end{proposition}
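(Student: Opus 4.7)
The plan is to establish the two inequalities $d(P,K_n) \leq 1$ and $d(P,K_n) \geq 1$ separately. I will place $K_n$ in its standard $2n$-plat form and take $P$ to be a $(0,n)$-bridge sphere positioned just above the braided region, so that the $n$ trivial arcs above $P$ realize the top plat matching $M_A$ on the $2n$ punctures of $P_{K_n}$, while the $n$ trivial arcs below $P$ determine a second matching $M_B$ obtained from the bottom plat matching by conjugation by the permutation of the pretzel braid. The key combinatorial observation is that $M_A \cup M_B$, viewed as a $2$-regular graph on the $2n$ punctures, forms a single cycle; this is equivalent to $K_n$ being a knot rather than a link with several components.

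To show $d(P,K_n) \leq 1$, I will exhibit disjoint essential curves $\alpha \in \mathcal{A}$ and $\beta \in \mathcal{B}$. Using $n \geq 4$, I can pick a pair $\{p,q\} \in M_A$ and a pair $\{r,s\} \in M_B$ with $\{p,q\} \cap \{r,s\} = \emptyset$; this is possible because at most two of the $n \geq 4$ pairs of $M_B$ can contain an endpoint of $\{p,q\}$, leaving at least two pairs disjoint from it. Let $\alpha$ and $\beta$ be small simple closed curves on $P_{K_n}$ enclosing exactly $\{p,q\}$ and $\{r,s\}$ respectively. A bridge disk for the trivial arc above with endpoints $\{p,q\}$ can be pushed slightly off $P$ to produce a compressing disk in $V_1 \setminus K$ with boundary $\alpha$, and similarly for $\beta$ below. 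Both curves are essential since $2n - 2 \geq 6$, and because their enclosed puncture sets are disjoint they can be realized disjointly on $P$.

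To show $d(P,K_n) \geq 1$, I will argue that $\mathcal{A} \cap \mathcal{B} = \emptyset$. If an essential $\gamma$ bounded compressing disks on both sides, then the puncture subset $\gamma$ encloses would be a nontrivial proper subset closed under both matchings $M_A$ and $M_B$, hence a nontrivial proper union of connected components of the $2$-regular graph $M_A \cup M_B$. Since that graph is a single cycle, no such subset exists, a contradiction.

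The main obstacle I anticipate is the universal quantifier on $P$: the argument above treats the standard plat-form bridge sphere, but the proposition claims distance exactly $1$ for every genus zero bridge surface of $K_n$. Extending to an arbitrary $(0,b)$-bridge sphere will likely require invoking an essential Conway sphere in the exterior of $K_n$ (available when $n \geq 4$, obtained by separating two tangles from the remaining $n-2$) together with a standard minimization-of-intersection argument to produce the required pair of disjoint compressing disks on any $P$. Making this uniform control work, and in particular reconciling it with the behavior of perturbed bridge spheres, is the delicate point.
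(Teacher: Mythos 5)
Your proof is correct and takes essentially the same route as the paper: disjoint bridge disks above and below the sphere give disjoint essential curves in $\mathcal{A}$ and $\mathcal{B}$ (so $d\leq 1$), and connectedness of the knot rules out distance zero (your single-cycle condition on $M_A\cup M_B$ is exactly the combinatorial form of the paper's ``a common boundary would give a sphere splitting $K_n$ into two components''). The ``delicate point'' you flag at the end is not one the paper engages with either --- its upper bound is read off a figure of the standard plat position --- and in any case no Conway-sphere machinery is needed: your counting argument (a pair of $M_A$ meets at most two pairs of $M_B$, so some pair of $M_B$ is disjoint from it once $b\geq 3$) applies verbatim to the matchings induced by any $(0,b)$-bridge sphere, since every such sphere has $b\geq b_0(K_n)=n\geq 4$.
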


\begin{proof}

See figure~\ref{fig:pretzeldisks}.  The two disks that are above and below the bridge surface $P$ are bridge disks for the knot.  If we would thicken these disks by taking an $\epsilon$ neighborhood of each, and then consider the boundary of these neighborhoods, we obtain disks which bound essential simple closed curves in $P$. Since they are disjoint curves, the vertices they correspond to in the curve complex are adjacent. Hence, $d(P,K_n)\leq 1$.  If the distance is zero, we would have disks on opposite sides of $P$ which bound essential simple closed curves that are isotopic to each other, so we can isotope our disks to have the same boundary, and form a sphere.  This implies that our knot has at least two components, one on each side of the sphere, contradicting our assumption that $K_n$ is a knot. Hence, $d(P,K_n)=1$.

\begin{figure}
\begin{center}
\includegraphics[width=6in]{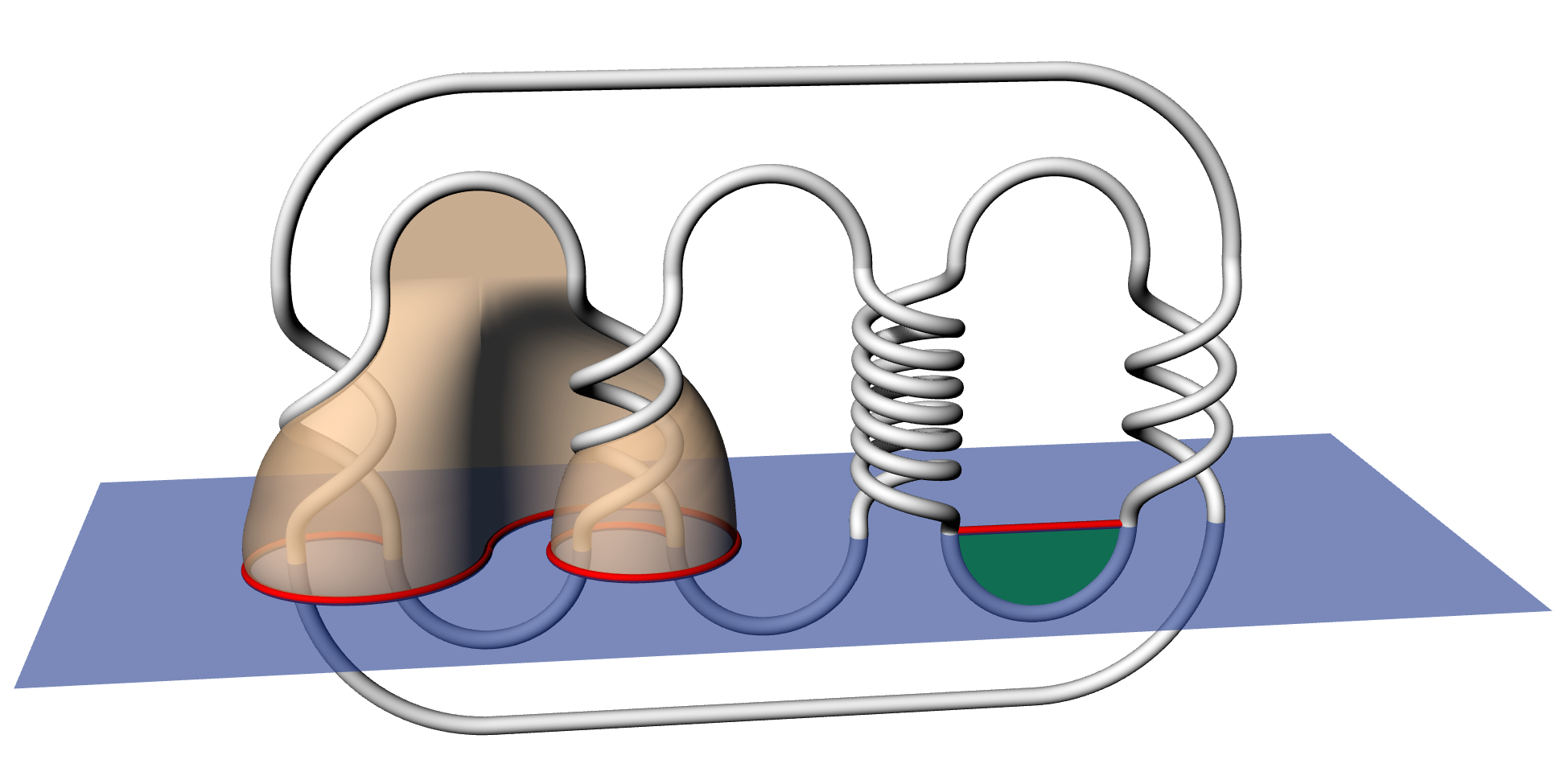}
\caption{A four branched pretzel knot with disjoint bridge disks on opposite sides of the bridge surface.}\label{fig:pretzeldisks}
\end{center}
\end{figure}
\end{proof}

This gives us that any pretzel knot $K_n=K_n(p_1,\ldots,p_n)$ with $\alpha\not = 1$ and $n\geq4$ has a stair-step bridge spectrum and is not high distance.  Thus, one could not find the bridge spectrum of $K_n$ with theorem \ref{thm:dist}.  As an example, $K_n(3p_1,3p_2,\ldots,3p_n)$ for $n\geq 4$ for any pretzel knot $K_n(p_1, \ldots,p_n)$ has ${\bf b}(K_n)=(n,n-1,\ldots,1,0)$ and distance one.

\end{document}